\documentclass[a4paper]{article}

\newcommand{\f}{\frac}

\newcommand{\R}{\mathbb R}
\newcommand{\C}{\mathbb C}
\newcommand{\N}{\mathbb N}

\newcommand{\eps}{\varepsilon}
\renewcommand{\epsilon}{\varepsilon}

\newcommand{\Om}{\Omega}

\newcommand{\dist}{\operatorname{dist}}

\newcommand{\V}{\mathcal{V}}
\newcommand{\dom}{\operatorname{dom}}
\newcommand{\h}{\mathcal{H}}
\newcommand{\id}{\mathrm{id}}

\newenvironment{enumi}{\begin{enumerate}[(i)]}{\end{enumerate}}

\AtBeginDocument{
	\def\l{\mathopen}
	\def\r{\mathclose}
}

\usepackage{etex}
\usepackage[english]{babel}
\usepackage{tikz}
\usetikzlibrary{arrows,decorations.pathmorphing,backgrounds,positioning,fit,petri,patterns}
\usepackage{pgfplots}
\usepackage[utf8]{inputenc}
\usepackage{graphicx}
\usepackage[colorlinks=true,linkcolor=black,citecolor=black]{hyperref}
\usepackage{amsfonts}
\usepackage{enumerate}
\usepackage{booktabs}
\usepackage{array} 
\usepackage{paralist} 
\usepackage{subfig} 
\usepackage{mathtools}
\usepackage{tabu}
\usepackage{amsthm}
\usepackage{empheq}
\usepackage{amsopn}
\usepackage{dsfont}
\usepackage{wrapfig}
\usepackage[font=small]{caption}
\usepackage{units}
\usepackage[symbol]{footmisc}
\usepackage{todonotes}
\usepackage{amssymb}
\usepackage{pdfpages}
\usepackage{setspace}

\usepackage{kvsetkeys}
\usepackage{etexcmds}

\makeatletter
\newcount\arg@count
\newcommand{\arg@parser}[1]{%
  \advance\arg@count\@ne
  \expandafter\let\csname arg\romannumeral\arg@count\endcsname\comma@entry
}
\newcommand\res[1]{
  \arg@count=\z@
  \comma@parse{ \lambda,A }\arg@parser 
  \arg@count=\z@
  \comma@parse{#1}\arg@parser
  \ifnum\arg@count>2 %
    \@latex@error{Too many arguments}{%
      The macro \string\mycmd\space got \the\arg@count\space
       arguments,\MessageBreak
      but expected are 2 arguments.\MessageBreak
      \@ehd
    }%
  \fi
  \edef\process@me{%
    \noexpand\@res
    {\etex@unexpanded\expandafter{\argi}}%
    {\etex@unexpanded\expandafter{\argii}}%
  }%
  \process@me
}
\newcommand{\@res}[2]{%
  \ensuremath\left( #1 - #2 \right)^{-1}
}
\makeatother

\makeatletter
\newcount\arg@count
\newcommand\p[1]{
  \arg@count=\z@
  \comma@parse{  }\arg@parser 
  \arg@count=\z@
  \comma@parse{#1}\arg@parser
  \ifnum\arg@count=2 %
  \else
    \@latex@error{Wrong number of mandatory arguments}{%
      The macro \string\p\space got \the\numexpr\arg@count-2\relax\space
      mandatory arguments,\MessageBreak
      but expected are 3 mandatory arguments.\MessageBreak
      \@ehd
    }%
  \fi
  \edef\process@me{%
    \noexpand\@p
    {\etex@unexpanded\expandafter{\argi}}%
    {\etex@unexpanded\expandafter{\argii}}%
  }%
  \process@me
}
\newcommand{\@p}[2]{%
  \ensuremath \left\langle #1 , #2 \right\rangle
}
\makeatother

\allowdisplaybreaks

\theoremstyle{definition}
\newtheorem{de}{Definition}

\theoremstyle{plain}
\newtheorem{prop}[de]{Proposition}
\newtheorem{lemma}[de]{Lemma}
\newtheorem{theorem}[de]{Theorem}

\newtheorem{corollary}[de]{Corollary}

\theoremstyle{remark}
\newtheorem{remark}[de]{Remark}

\providecommand{\keywords}[1]{\textbf{\textit{Keywords: }} #1}

\author{F. R\"osler\thanks{School of Mathematics,\newline 
Cardiff University \newline
Email: \href{mailto:roslerf@cardiff.ac.uk}{RoslerF@Cardiff.ac.uk}}}
\date{}
\title{A Note on Spectral Convergence in Varying Hilbert Spaces}

\begin{document}

\maketitle

\begin{abstract}
	We prove sufficient conditions for Hausdorff convergence of the spectra of sequences of closed operators defined on varying Hilbert spaces and converging in norm-resolvent sense, i.e. $\|J_\eps(1+A_\eps)^{-1} - (1+A)^{-1}J_\eps\|\to 0$ as $\eps\to 0$, where $J_\eps$ is a suitable identification operator between the domains of the operators. This is an extension of results of \cite{MNP}, who proved absence  of spectral pollution for sectorial operators.
\end{abstract}

\keywords{Operator Theory; Spectral Approximation; Norm-Resolvent convergence}

\section{Introduction}

Convergence of spectra of sequences of operators has long been a subject of intense interest in asymptotic analysis. Many techniques, such as homogenisation, or dimensional reduction rely on convergence theorems for the spectra of sequences of operators. 

Classical theorems from perturbation theory (cf \cite{Kato}) give partial answers to the question of spectral convergence under different assumptions on the sequence. One central classical result is that under norm-resolvent convergence, so-called \emph{spectral pollution} is not possible, i.e. if $A_n$ converges to $A$ in norm resolvent sense, then the ``limit'' of the spectra of $A_n$ cannot be larger than the spectrum of $A$. More precisely, one has
\begin{theorem}[{\cite{Kato,RS1}}]\label{th:classical}
	If $A_n,A$ are closed operators on a Hilbert space $\h$ and there exists $\mu\in\rho(A)$ such that for $n$ large enough one has $\mu\in\rho(A_n)$ and $\|(\mu-A_n)^{-1}-(\mu-A)^{-1}\|\to 0$, as $n\to\infty$, then for any $\lambda\in\rho(A)$ there exists $n_0\in\N$ such that $\lambda\in\rho(A_n)$ for all $n>n_0$.
\end{theorem}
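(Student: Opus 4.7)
The plan is to reduce the unbounded spectral stability statement to the openness of the resolvent set for bounded operators, using the spectral mapping theorem for resolvents: for any closed operator $T$ with $\mu\in\rho(T)$ and any $\nu\neq 0$, one has $\nu\in\rho((\mu-T)^{-1})$ if and only if $\mu-\nu^{-1}\in\rho(T)$.

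The case $\lambda=\mu$ is immediate from the hypothesis, so I fix $\lambda\in\rho(A)\setminus\{\mu\}$ and set
$$R_n := (\mu-A_n)^{-1},\qquad R:=(\mu-A)^{-1},\qquad \zeta:=(\mu-\lambda)^{-1}\neq 0.$$
By the spectral mapping property just recalled, $\lambda\in\rho(A)$ is equivalent to $\zeta\in\rho(R)$. Since $R_n,R$ are bounded, everywhere-defined operators on $\h$ with $\|R_n-R\|\to 0$, the factorisation
$$\zeta-R_n \;=\; (\zeta-R)\bigl[I+(\zeta-R)^{-1}(R-R_n)\bigr]$$
combined with a Neumann series for the bracket (valid as soon as $\|(\zeta-R)^{-1}\|\,\|R-R_n\|<1$) shows that $\zeta\in\rho(R_n)$ for all sufficiently large $n$. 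Applying the spectral mapping property in reverse then gives $\lambda=\mu-\zeta^{-1}\in\rho(A_n)$ for the same range of $n$.

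The only step that requires a moment of care, rather than being a direct calculation, is the transfer of invertibility from the bounded operator $\zeta-R_n$ back to the unbounded operator $\lambda-A_n$ on $\dom(A_n)$. This is handled by the identity $(\lambda-A_n)R_n = I-(\mu-\lambda)R_n = -(\mu-\lambda)(R_n-\zeta)$ on $\h$, which exhibits $(\mu-\lambda)^{-1}R_n(R_n-\zeta)^{-1}$ as a bounded two-sided inverse of $\lambda-A_n$; its range lies in $\dom(A_n)=\ran(R_n)$ because $R_n$ commutes with every bounded function of itself, and injectivity of $\lambda-A_n$ follows from $\zeta$ not being an eigenvalue of $R_n$. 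Hence no real obstacle arises: the proof is essentially a packaging of spectral mapping plus a Neumann series argument.
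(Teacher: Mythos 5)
Your proof is correct. Note that the paper does not itself prove Theorem~\ref{th:classical}; it cites it to \cite{Kato,RS1} and invests its effort in the generalisation, Theorem~\ref{th:mainth}, which is established by a genuinely different mechanism. There, the resolvent equation is used to derive the identity $V(z)=\bigl(\id_{\h}-(z-z_0)R(z)\bigr)V(z_0)\bigl(\id_{\h_\eps}-(z-z_0)R_\eps(z)\bigr)$ for the discrepancy $V(z)=J_\eps R_\eps(z)-R(z)J_\eps$, and this feeds into a uniform bound on $\|R_\eps(z)\|$ (Lemma~\ref{lemma:R<l=>R_eps<L}) and a clopen-set argument on compact connected subsets of $\rho(A)$ (Proposition~\ref{pr:Generalised_spectral_convergence1}), followed by patching components via curves to $z_0$; this is why Theorem~\ref{th:mainth}~(i) carries a connectedness hypothesis on $\rho(A)$. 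Your route---reducing to the bounded resolvents $R_n,R$ via the spectral mapping theorem for $(\mu-T)^{-1}$ and then invoking the openness of the resolvent set of a bounded operator through a Neumann series---is shorter, needs no connectedness assumption, and handles an arbitrary $\lambda\in\rho(A)$ in a single step; your passage from invertibility of $\zeta-R_n$ back to invertibility of $\lambda-A_n$ on $\dom(A_n)$, via the identity $(\lambda-A_n)R_n=(\mu-\lambda)(\zeta-R_n)$, is exactly right. The trade-off is that the spectral-mapping reduction does not survive the paper's varying-space setting: the hypothesis $\|J_\eps R_\eps(z_0)-R(z_0)J_\eps\|\to 0$ does not put $R_\eps(z_0)$ and $R(z_0)$ on a common space, so a Neumann series for $\zeta-R_\eps(z_0)$ has no direct analogue there, and the resolvent-identity-plus-connectedness machinery is what replaces it.
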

However, the converse of Theorem \ref{th:classical} is \emph{not} true, meaning that there exist sequences of operators for which the spectrum of the spectrum of the limit operator $A$ is much larger than the spectrum of any $A_n$ for finite $n$. Indeed, let us demonstrate this with an example (cf. \cite[Ch. IV.3.1]{Kato}).

\paragraph{Example.}
Let $\h=l^2(\mathbb{Z})$ and let $\{e_n\}$ be its canonical basis. For $n\in\N$ define $T_n\in L(\h)$ by
\begin{align*}
	T_ne_0 &:= n^{-1} e_{-1}\\
	T_ne_i &:= e_{i-1},\qquad i\neq 0.
\end{align*}
A straightforward calculation shows that $T_n-\lambda$ is boundedly invertible for every $\lambda\in\C$ with $|\lambda|<1$. Since also clearly $\|T_n\|\leq 1$ for all $n$, we conclude that $\sigma(T_n)\subset S^1$, the unit circle in $\C$.

Now consider the limit of $(T_n)$. By definition, $T_n$ is a rank-one perturbation of the operator $T$ defined by
\begin{align*}
	T_ne_0 &:= 0\\
	T_ne_i &:= e_{i-1},\qquad i\neq 0.
\end{align*}
It follows that $\|T_n-T\|_{L(\h)} = \f1n$ and hence $T_n$ converges to $T$ in operator norm. However, the spectrum of $T$ is considerably larger than the unit circle $S^1$. Indeed, another straightforward calculation shows that for every $\lambda\in\C$ with $|\lambda|<1$ the vector $x:=\sum_{n=0}^\infty \lambda^ne_n$ is an eigenvector of $T$ and thus $\sigma(T)$ contains the closed unit disk.

\

The above example shows that even under operator norm convergence, \emph{spectral inclusion} can fail, i.e. there may exist points $\lambda\in\sigma(A)$ such that there exists no sequence $\lambda_n\in\sigma(A_n)$ with $\lambda_n\to\lambda$.

There exist other examples demonstrating this lower semi-discontinuity of the spectrum (cf. \cite[p. 282]{Rickart}, \cite[Ch.1, §5]{aupetit} for an example in which the spectrum collapses from a disk to a point). Therefore, further assumptions are necessary in order to obtain spectral inclusion.

In the next section, we will set the scene and present our main results. The following sections contain the proofs of these results.

\section{Main Results}

In order to aim for generality, we consider families of operators $A_\eps$ which are not necessarily defined on the same Hilbert space. More precisely, for $\eps>0$ let $\h,\h_\eps$ be Hilbert spaces and let $A_\eps:\h_\eps\supset\dom(A_\eps)\to\h_\eps$ and $A:\h\supset\dom(A)\to\h$ be closed operators.

Let us denote $\V_\eps:=\big(\dom(A_\eps),\|\cdot\|_{A_\eps}\big)$ and $\V:=\big(\dom(A),\|\cdot\|_{A}\big)$, where $\|\cdot\|_A$ denotes the  graph norm of $A$, that is, $\|u\|_{\V}^2 := \|u\|_A := \|u\|_{\h}+\|Au\|_{\h}$ (analogously for $\|\cdot\|_{\V_\eps}$). 
\begin{de}\label{def:extended_norm-resolvent}
	Assume that there exists $z_0\in\bigcap_{\eps>0}\rho(A_\eps)\cap\rho(A)$ and operators
		$J_\eps: \h_\eps\to \h$ and $ I_\eps:\h\to\h_\eps$
	such that 
	\begin{enumi}
		\item $\|I_\eps J_\eps - \id_{\h_\eps}\|_{L(\V_\eps,\h_\eps)}\to 0$ as $\eps\to 0$,\label{(i)old}
		\item $J_\eps I_\eps \to \id_{\h}$ strongly as $\eps\to 0$,\label{(ii)new}
		\item $\|I_\eps\|_{ L(\h,\h_\eps)},\|J_\eps\|_{ L(\h_\eps,\h)}\leq M$ for some $M>0$ uniformly in $\eps$,\label{(ii)old}
		\item $\bigl\|J_\eps(z_0\,\id_{\h_\eps}-A_\eps)^{-1} -(z_0\,\id_{\h}-A)^{-1}J_\eps\bigr\|_{ L(\h_\eps,\h)}\to 0$ as $\eps\to 0$.\label{(iii)old}
	\end{enumi}
	Then we say that the sequence $(A_\eps)$ converges to $A$ in the \emph{extended norm resolvent sense}. 
\end{de}
\begin{remark}
	Note the asymmetry between the assumptions (i) and (ii) above, where we require convergence with respect to the operator norm $\|\cdot \|_{L(\V_\eps,\h_\eps)}$ and only \emph{strong} convergence for $J_\eps I_\eps$, allowing a great deal of freedom for the construction of $I_\eps,J_\eps$ in applications.

Moreover, note that if $\h_\eps\equiv \h$ for all $\eps>0$ and $I_\eps=J_\eps=\id_{\h}$ for all $\eps>0$, this definition reduces to the classical definition of norm resolvent convergence.
\end{remark}

The assumptions on the identification operators $I_\eps,J_\eps$ cover many cases relevant in applications. Examples include
\begin{enumi}
	\item \emph{Projection onto subspaces.} Let $\h_n$ be an increasing sequence of closed subspaces such that the orthogonal projection $P_n:\h\to\h_n$ converges strongly to the identity. Then defining $I_n:=P_n$ and $J_n:\h_n\hookrightarrow\h,\, J_n(x)=x$ satisfy assumptions (i)-(iii) of Definition \ref{def:extended_norm-resolvent}. Indeed, it is easy to check that $I_n J_n=\id_{\h_n}$, while the strong convergence of $J_\eps I_\eps$ follows from the strong convergence $P_n\to\id_\h$.
	\item \emph{Perforated domains.} Let $\Om\subset\R^d$ be open and $T_\eps\subset\Om$ be a collection of closed subsets such that $|T_\eps|\to 0$ as $\eps\to 0$, where $|\cdot|$ denotes the Lebesgue measure. Let $\h:=L^2(\Om), \h_\eps:=L^2(\Om\setminus T_\eps)$ and $\V:=H^1(\Om)$. Define $I_\eps,J_\eps$ by
	\begin{align*}
		J_\eps &: \h_\eps \to \h,
		&J_\eps f(x) &= \begin{cases}
			f(x), & x\in\Om\setminus T_\eps, \\[0.1em]
			0, & x\in T_\eps
		\end{cases}
		\\[1mm]
		I_\eps &: \h \to \h_\eps,
		&I_\eps g(x) &= g|_{\Om\setminus T_\eps}
	\end{align*}
	In this case we have again that $I_\eps J_\eps = \mathrm{id}_{\h_\eps}$ and $
		\|J_\eps I_\eps - \mathrm{id}_{\h}\|_{ L(\V,\h)} \to 0$. Indeed, the first equality is trivial, while the second follows by the following calculation. Let $f\in \V$. Then we have $\|f-J_\eps I_\eps f\|_{L^2(\Om)} = \|f\|_{L^2(T_\eps)}$. To show that this quantity converges to 0 uniformly in $f$, note that
	\begin{align*}
		\|f\|_{L^2(T_\eps)}^2 &\leq \left\|1\right\|_{L^{2p}( T_\eps)}^2 \left\|f\right\|_{L^{2q}(T_\eps)}^2 
	\end{align*}
	for $p,q>1$ with $p^{-1}+q^{-1}=1$, by H\"older's inequality. Since $f\in H^1(\Om)$, we can use the Gagliardo-Sobolev-Nierenberg inequality to conclude (for suitable $q$) that
	\begin{align*}
		\|f\|_{L^2(T_\eps)}^2 &\leq  \left\|1\right\|_{L^{2p}(T_\eps)}^2   C \left\|f\right\|_{H^1(\Om)}^2\\
		&= C |T_\eps|^{\nicefrac 1 p}  \left\|f\right\|_{H^1(\Om)}^2
	\end{align*}
	with some suitable $p>0$. Since $|T_\eps|\to 0$ as $\eps\to 0$, the desired convergence follows. By density of $\V$ in $\h$ we readily conclude the strong convergence $J_\eps I_\eps\to\id_\h$.
	
	Indeed, the main result of this paper complements the proof of spectral convergence in perforated domains in \cite{CDR}, which was only sketched there.
	\item \emph{Dimensional reduction.} Consider a domain $\Om_\eps\subset\R^{d+1}$ of the form $\Om_\eps = (\eps U)\times(0,1)$, where $U\subset\R^d$ is open and bounded. Define $\h_\eps:=L^2(\Om_\eps),\;\h:=L^2((0,1))$ and $\V_\eps:=H^1(\Om_\eps)$. For $f\in\h$, define $(I_\eps f)(x,t):=f(t)$ and for $u\in\h_\eps$ define $(J_\eps u)(t):=|\eps U|^{-d}\int_{\eps U}u(x,t)\,dx$, which is well-defined for almost every $t\in(0,1)$ by Fubini's theorem. This time, it is easily checked that $J_\eps I_\eps=\id_\h$. Moreover, one has \begin{align*}
			(u-I_\eps J_\eps u)(x,t) &= u(x,t) - |\eps U|^{-d}\int_{\eps U}u(y,t)\,dy\\
			&= |\eps U|^{-d}\int_{\eps U}(u(x,t) - u(y,t))\,dy\\
		\end{align*}
		and hence
		\begin{align*}
			\|(1-I_\eps J_\eps)u\|_{L^2(\Om_\eps)}^2 &=\int_{\eps U}\int_0^1\left||\eps U|^{-d}\int_{\eps U}(u(x,t) - u(y,t))\,dy\right|^2 dx\,dt\\
			&\leq \int_{\eps U}\int_0^1|\eps U|^{-d}\int_{\eps U}|u(x,t) - u(y,t)|^2\,dy\,dx\,dt\\
			&= |\eps\operatorname{diam}(U)|^2\int_{\eps U}\int_0^1|\eps U|^{-d}
			 \left\|\nabla u(\cdot,t)\right\|^2_{L^2(\eps U)}\,dx\,dt\\
			 &= \eps^2\operatorname{diam}(U)^2
			 \left\|\nabla u\right\|^2_{L^2(\Om_\eps)},
		\end{align*}
		where we have used Jensen's inequality. The above inequality shows that we have $\|\id_{\h_\eps}-I_\eps J_\eps\|_{L(\V_\eps,\h_\eps)}\leq C\eps$.
\end{enumi}

The main result of this article is the following.

\begin{theorem}\label{th:mainth}
	Let $A_\eps,A$ be closed operators on $\h_\eps,\h$ respectively, and assume that $A_\eps$ converges to $A$ in extended norm resolvent sense. Then one has
	\begin{enumi}
		\item If $\rho(A)$ is connected, then for every compact $K\subset\rho(A)$ there exists $\eps_0>0$ such that $K\subset\rho(A_\eps)$ for all $\eps\in(0,\eps_0)$.
		\item Assume that $K\subset\C$ is compact, connected such that $K\subset\rho(A_{\eps})$ and $\|(z-A_\eps)^{-1}\|_{L(\h_\eps)}\leq C$ for all $\eps>0,\,z\in K$. 
			Assume further that $K$ can be connected to $\{z_0\}$ by a curve $\gamma$ lying in $\rho(A_{\eps})$ for all $\eps>0$.  Then one has $K\subset\rho(A)$.
		\item For every isolated point $\lambda\in\sigma(A)$ such that $B_\delta(\lambda)\setminus\{\lambda\}$ is in the same connected component of $\rho(A)$ as $z_0$ there exists a sequence $\lambda_\eps\in\sigma(A_\eps)$ such that $\lambda_\eps\to\lambda$.
	\end{enumi}
\end{theorem}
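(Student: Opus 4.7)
The first step is the uniform bound $\sup_\eps\|(z_0-A_\eps)^{-1}\|_{L(\h_\eps)}<\infty$. I split
\[(z_0-A_\eps)^{-1}=I_\eps J_\eps(z_0-A_\eps)^{-1}+(\id_{\h_\eps}-I_\eps J_\eps)(z_0-A_\eps)^{-1},\]
bounding the first summand by $M\|J_\eps(z_0-A_\eps)^{-1}\|\le M(M\|(z_0-A)^{-1}\|+o(1))$ via (iii) and (iv) of Definition~\ref{def:extended_norm-resolvent}, and the second by $\|\id_{\h_\eps}-I_\eps J_\eps\|_{L(\V_\eps,\h_\eps)}\|(z_0-A_\eps)^{-1}\|_{\V_\eps}$ via (i), with the graph norm absorbed through $A_\eps(z_0-A_\eps)^{-1}=z_0(z_0-A_\eps)^{-1}-\id$. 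Given this bound, a Neumann series yields existence and a uniform bound of $(z-A_\eps)^{-1}$ on a disc $B(z_0,r_0)$, and a short induction on $k$ shows $\|J_\eps(z_0-A_\eps)^{-k}-(z_0-A)^{-k}J_\eps\|\to 0$ for every $k\ge 1$, transferring the whole extended norm-resolvent convergence to that disc. For compact $K\subset\rho(A)$, I then cover a polygonal path from $z_0$ to any $z\in K$ inside the connected open set $\rho(A)$ by finitely many such Neumann discs and propagate step by step.

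\textbf{Part (ii).} Write $\tilde R_\eps(z):=J_\eps(z-A_\eps)^{-1}I_\eps$, so $\|\tilde R_\eps(z)\|\le M^2C$ on $K$. Combining the resolvent identity $(z-A_\eps)^{-1}-(z_0-A_\eps)^{-1}=(z_0-z)(z_0-A_\eps)^{-1}(z-A_\eps)^{-1}$ with (iv) of Definition~\ref{def:extended_norm-resolvent} (used in the form $(z_0-A)^{-1}J_\eps=J_\eps(z_0-A_\eps)^{-1}+o(1)$) and with the insertion $I_\eps J_\eps\approx\id_{\h_\eps}$ from (i), I derive the two operator-norm identities
\[
T\,\tilde R_\eps(z)=\tilde R_\eps(z_0)+o_\eps(1),\qquad \tilde R_\eps(z)\,T=\tilde R_\eps(z_0)+o_\eps(1),
\]
where $T:=\id_\h-(z_0-z)(z_0-A)^{-1}$; the error bounds use the uniform estimates on $\|(z-A_\eps)^{-1}\|$ and $\|(z_0-A_\eps)^{-1}\|$ (from Part~(i)) together with the graph-norm control $\|(z-A_\eps)^{-1}I_\eps f\|_{\V_\eps}\lesssim\|f\|_{\h}$. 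By Banach--Alaoglu a subsequence $\tilde R_{\eps_k}(z)$ converges in the weak operator topology to some $R(z)\in L(\h)$, and since $\tilde R_\eps(z_0)\to(z_0-A)^{-1}$ strongly (a direct consequence of (iv) and (ii) of the definition), passing to the limit in both identities yields $T R(z)=R(z) T=(z_0-A)^{-1}$. A short case analysis then shows $T$ invertible: injectivity because a kernel vector $u$ forces $0=R(z)Tu=(z_0-A)^{-1}u$ and hence $u=0$; closed range because a would-be Weyl sequence $\|Tu_n\|\to 0$, $\|u_n\|=1$ gives $R(z)Tu_n=(z_0-A)^{-1}u_n\to 0$ while $\|(z_0-A)^{-1}u_n\|=\|u_n-Tu_n\|/|z_0-z|\to 1/|z_0-z|>0$; dense range because $\ran T\supset(z_0-A)^{-1}\h=\dom(A)$. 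Hence $z\in\rho(A)$ and $R(z)=(z-A)^{-1}$.

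\textbf{Part (iii).} Pick $0<\delta'<\delta$ so that $\Gamma:=\partial B_{\delta'}(\lambda)$ lies in the component of $\rho(A)$ containing $z_0$. By Part~(i), $\Gamma\subset\rho(A_\eps)$ for all small $\eps$ and $\|J_\eps(z-A_\eps)^{-1}-(z-A)^{-1}J_\eps\|\to 0$ uniformly in $z\in\Gamma$; integrating yields $\|J_\eps P_\eps-PJ_\eps\|\to 0$ for the Riesz projections $P_\eps:=\frac{1}{2\pi i}\oint_\Gamma(z-A_\eps)^{-1}\,dz$ and $P:=\frac{1}{2\pi i}\oint_\Gamma(z-A)^{-1}\,dz$. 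If no sequence $\lambda_\eps\in\sigma(A_\eps)$ accumulated at $\lambda$, then along some subsequence $\eps_k\to 0$ one would have $P_{\eps_k}=0$, forcing $\|PJ_{\eps_k}\|\to 0$. Since $\lambda$ is isolated in $\sigma(A)$ we have $P\ne 0$; choosing $u\in\ran(P)$ with $\|u\|=1$ and using $J_{\eps_k}I_{\eps_k}u\to u$ strongly from Definition~\ref{def:extended_norm-resolvent}(ii), one obtains $u=Pu=\lim PJ_{\eps_k}I_{\eps_k}u=0$, a contradiction.

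The main obstacle is Part~(ii): both identities $T\tilde R_\eps(z)=\tilde R_\eps(z_0)+o(1)$ and $\tilde R_\eps(z)T=\tilde R_\eps(z_0)+o(1)$ must be established in operator norm, which requires careful exploitation of the asymmetric assumptions (i)--(iv) of Definition~\ref{def:extended_norm-resolvent}, and the subsequent passage to the weak operator limit must use that $\tilde R_\eps(z_0)\to(z_0-A)^{-1}$ holds strongly in a way that survives composition with the uniformly bounded, only weakly convergent $\tilde R_\eps(z)$; the two-sided pseudo-resolvent equation then forces $T$ to be invertible and identifies $R(z)$ with $(z-A)^{-1}$.
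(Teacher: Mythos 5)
Your proposal is correct in its essentials but takes a genuinely different route from the paper's for all three parts, so let me compare and also flag one imprecision.

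For (i), the paper proves a technical lemma (Lemma~\ref{lemma:R<l=>R_eps<L}) that bounds $\|(z-A_\eps)^{-1}\|$ in terms of $\|(z-A)^{-1}\|$, and then deduces $K\subset\rho(A_\eps)$ by showing $K\cap\rho(A_\eps)$ is simultaneously open and closed in the connected set $K\cup\gamma$. You instead establish a uniform bound at $z_0$, propagate it through Neumann discs along a polygonal path, and transfer the norm-resolvent convergence disc by disc. Both work; the paper's argument avoids the bookkeeping of how many discs one needs and the inductive estimate $\|J_\eps(z_0-A_\eps)^{-k}-(z_0-A)^{-k}J_\eps\|\to 0$. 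For (iii), you use Riesz projections and the estimate $\|J_\eps P_\eps-PJ_\eps\|\to 0$, then contradict $P\neq 0$ via the strong convergence $J_\eps I_\eps\to\id_\h$; the paper instead uses the maximum principle for (the subharmonic function) $\|R_\eps(\cdot)\|$ on a small disc. Your route is the more standard one and arguably cleaner; the paper's avoids contour integration. For (ii), the paper proves a companion lemma (Lemma~\ref{lemma:R_eps<l=>R<L}) giving a uniform bound on $\|R(z)\|$ from the bound on $\|R_\eps(z)\|$, and again runs an open-closed argument on $K\cup\gamma$; you instead extract a weak operator limit of $\tilde R_\eps(z)=J_\eps R_\eps(z)I_\eps$ and show it is a two-sided pseudo-inverse of $T=\id_\h-(z_0-z)(z_0-A)^{-1}$, from which $T$ invertible and $z\in\rho(A)$ follows. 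That is a legitimately different mechanism.

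The one imprecision worth noting is in Part (ii): you claim both
\[
T\,\tilde R_\eps(z)=\tilde R_\eps(z_0)+o_\eps(1),\qquad \tilde R_\eps(z)\,T=\tilde R_\eps(z_0)+o_\eps(1)
\]
hold in \emph{operator norm}. The first one does, because the commutation of $J_\eps$ past the resolvent is controlled in operator norm by Definition~\ref{def:extended_norm-resolvent}~(iv), and $(z-A_\eps)^{-1}I_\eps$ is uniformly bounded. The second one does not, in general: unwinding the difference gives a term of the form $(z_0-z)\,\tilde R_\eps(z)\,(z_0-A)^{-1}(J_\eps I_\eps-\id_\h)$, and since $J_\eps I_\eps\to\id_\h$ only \emph{strongly} (Definition~\ref{def:extended_norm-resolvent}~(ii)), this term is $o(1)$ only in the strong operator topology, not in norm. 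The graph-norm control $\|(z-A_\eps)^{-1}I_\eps f\|_{\V_\eps}\lesssim\|f\|_\h$ that you invoke controls the $(\id_{\h_\eps}-I_\eps J_\eps)$ insertion but is of no help against the $J_\eps I_\eps-\id_\h$ tail. Fortunately this does not break your proof: strong convergence of the error already implies weak-operator convergence, and right multiplication by the fixed bounded operator $T$ preserves weak-operator convergence of $\tilde R_{\eps_k}(z)\to R(z)$, so passing to the limit still yields $R(z)T=(z_0-A)^{-1}$. You should therefore phrase the second identity as holding in the strong operator topology rather than in norm; the remainder of your invertibility argument (injectivity, closed range via the would-be Weyl sequence, density of the range via $\ran T\supset(z_0-A)^{-1}\h=\dom(A)$, plus the trivial case $z=z_0$) is then complete.
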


The first part of the above theorem shows the absence of spectral pollution, while the second part shows spectral inclusion under the additional assumption that $\|(z-A_\eps)^{-1}\|_{L(\h_\eps)}$ be uniformly bounded.

We remark that a similar statement to part (i) in the above theorem has already been proven in \cite{MNP,Post2006}. Our result is an extension of theirs in three ways. First, they considered only \emph{sectorial} operators, which can be defined via a sesquilinear form, whereas we treat general closed operators. Second, our assumptions on the identification operators $I_\eps,\,J_\eps$ are less restrictive. Third, the spectral inclusion results (ii) and (iii) are not at all considered in \cite{MNP}.

Furthermore, convergence of spectra and pseudospectra of operators on varying spaces have been studied in \cite{B16,B18,BoegliSiegl,Hansen11} in the situation where all spaces $\h,\h_\eps$ are subspaces of a common ``large'' Hilbert space $\h_0$ and $I_\eps$ plays the role of a projection operator. In this situation, an analogue of Theorem \ref{th:mainth} has been shown in \cite{B16}. 

Note that we do not assume any connection between the spaces $\h$ and $\h_\eps$ besides that introduced by Definition \ref{def:extended_norm-resolvent}.

From Theorem \ref{th:mainth} we immediately recover two classical results about spectral convergence.
\begin{corollary}\label{cor:classical1}
		If $A_\eps,A$ are selfadjoint and bounded below for almost all $\eps>0$ and $A_\eps\to A$ in extended norm resolvent sense, then for all bounded open $V\subset\mathbb C,$ one has $\sigma(A_\eps )\cap V\rightarrow\sigma(A )\cap V$ in Hausdorff sense.
\end{corollary}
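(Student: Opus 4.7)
The plan is to establish the two standard semicontinuity properties, from which Hausdorff convergence on compact subsets of $V$ follows directly. The key preliminary observation is that, for selfadjoint $A$ bounded below, $\sigma(A)$ is a proper closed subset of $\mathbb{R}$, so $\rho(A)=\mathbb{C}\setminus\sigma(A)$ is open and connected, putting us in the setting where part (i) of Theorem \ref{th:mainth} is directly available.

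For the upper semicontinuity (no spectral pollution), I would fix $\eta>0$ and consider the compact set $K:=\overline{V}\setminus B_\eta(\sigma(A))$. Since $K\subset\rho(A)$, Theorem \ref{th:mainth}(i) yields $K\subset\rho(A_\eps)$ for all sufficiently small $\eps$, whence $\sigma(A_\eps)\cap V\subset B_\eta(\sigma(A))$.

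For the lower semicontinuity (spectral inclusion), I would argue by contradiction: suppose there exist $\lambda\in\sigma(A)\cap V$, $\eta>0$ and a subsequence $\eps_k\to 0$ with $B_\eta(\lambda)\cap\sigma(A_{\eps_k})=\emptyset$ for every $k$. Then $K:=\overline{B_{\eta/2}(\lambda)}$ is compact and connected, $K\subset\rho(A_{\eps_k})$ by construction, and by selfadjointness $\|(z-A_{\eps_k})^{-1}\|\leq 2/\eta$ uniformly for $z\in K$. To realise the curve hypothesis of Theorem \ref{th:mainth}(ii), I would pick $\mu\in K\setminus\mathbb{R}\subset\rho(A)$ and a path $\gamma$ in the open connected set $\rho(A)$ joining $\mu$ to $z_0$; a compact tubular neighbourhood of $\gamma$ lies in $\rho(A)$, so part (i) places it in $\rho(A_\eps)$ for all sufficiently small $\eps$. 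After extracting a further subsequence and relabelling, I may assume $\gamma\subset\rho(A_{\eps_k})$ for every $k$, and Theorem \ref{th:mainth}(ii) then forces $K\subset\rho(A)$, contradicting $\lambda\in K\cap\sigma(A)$.

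The genuinely subtle step is the ``curve in $\rho(A_\eps)$ for every $\eps$'' condition of part (ii), which is not automatic; it is tamed by the two-stage reduction above---taking a path in $\rho(A)$ using its connectedness, then using part (i) on a tubular neighbourhood, then passing to a subsequence---after which the argument reduces to direct applications of Theorem \ref{th:mainth}.
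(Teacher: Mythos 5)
Your proof is correct and follows essentially the same two-step strategy as the paper: apply Theorem~\ref{th:mainth}(i) to $\overline{V}\setminus B_\eta(\sigma(A))$ for upper semicontinuity, and argue lower semicontinuity by contradiction via Theorem~\ref{th:mainth}(ii) applied to a small closed disk around a hypothetical spectral point missed by the $A_\eps$. The one place you deviate is the verification of the curve hypothesis in part (ii): the paper exploits the selfadjointness of each $A_\eps$ directly, observing that $\sigma(A_\eps)\subset\mathbb{R}$, so a curve avoiding the real line (except at its endpoint $z_0$) automatically lies in $\rho(A_\eps)$ for \emph{every} $\eps$ with no further argument. You instead construct a path in $\rho(A)$ and invoke part (i) on a compact tubular neighbourhood to push it into $\rho(A_\eps)$ for small $\eps$, at the cost of a further subsequence extraction. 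Both are valid; the paper's route is shorter for this selfadjoint corollary, while yours is more robust since it uses only connectedness of $\rho(A)$ rather than real-valuedness of $\sigma(A_\eps)$, and would adapt unchanged to non-selfadjoint families provided the uniform resolvent bound on $K$ is supplied by other means. As an added bonus, your detour through a compact $\rho(A)$-neighbourhood also guarantees, via Lemma~\ref{lemma:R<l=>R_eps<L}, a uniform resolvent bound for $A_\eps$ along the curve, which is implicitly needed when Proposition~\ref{pr:Generalised_spectral_convergence2} is applied to $K\cup\gamma$.
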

\begin{proof}
	By selfadjointness and boundedness from below of the operators involved, we have that $\rho(A)$ is connected.
	For given $r>0$, let $\delta>0$ and define the compact set $K:=\overline{B_r(0)}\setminus U_\delta(\sigma(A ))$, where $U_\delta(\cdot)$ denotes the open $\delta$-neighbourhood. By (i) we have that $K\subset \rho(A_\eps )$ for $\eps$ small enough. This shows that $\overline{B_r(0)\cap\sigma(A_\eps )}\subset \overline{B_r(0)\cap U_\delta(\sigma(A ))}$. 
	
	To see the converse inclusion $B_r(0)\cap \sigma(A )\subset B_r(0)\cap U_\delta(\sigma(A _\eps))$, let us argue by contradiction and suppose that there exists $\delta_0>0$ such that
	\begin{equation*}
		\overline{B_r(0)\cap\sigma(A )}\nsubseteq \overline{B_r(0)\cap U_{\delta_0}(\sigma(A _\eps))}\qquad\forall\eps>0.
	\end{equation*}
	By this assumption, there exists a sequence $(\lambda_\eps)$ in $\overline{B_r(0)\cap\sigma(A)}$ such that  $\dist(\lambda_\eps,\sigma(A_\eps))\ge\delta_0$ for all $\eps>0$. Since $(\lambda_\eps)$ is bounded, we can extract a subsequence $\lambda_{\eps'}\to\lambda_0\in \overline{B_r(0)}\cap\sigma(A)$.
	It follows that
	\begin{equation*}
		\overline{B_{\nicefrac{\delta_0}{2}}(\lambda_0)}\subset\rho(A _{\eps'})\qquad \text{for almost all }\eps'>0.
	\end{equation*}
	Since for all $\eps>0$ we have $\sigma(A_\eps)\subset[0,\infty)$, we can connect $B_{\nicefrac{\delta_0}{2}} $ to $\{z_0\}$ by a curve lying in $B_r(0) \cap\rho(A_\eps)$ and use Theorem \ref{th:mainth} (ii) to conclude that $\overline{B_{\nicefrac{\delta_0}{2}}(\lambda_0)}\subset\rho(A )$, which contradicts the fact that $\lambda_0\in\sigma(A )$.
	
	Since $\delta>0$ was arbitrary, the desired Hausdorff convergence follows.
\end{proof}


\begin{corollary}\label{cor:classical2}
	If $A_\eps\to A$ in extended norm resolvent sense and $(z_0-A)^{-1},\,(z_0-A_\eps)^{-1}$ are compact for all $\eps>0$ then for all bounded open $V\subset\mathbb C,$ one has $\sigma(A_\eps )\cap V\rightarrow\sigma(A )\cap V$ in Hausdorff sense.
\end{corollary}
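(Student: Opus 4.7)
The plan is to reduce to parts (i) and (iii) of Theorem \ref{th:mainth}, exploiting that compact resolvents force purely discrete spectra, which is cleaner than the argument for Corollary \ref{cor:classical1} because we avoid needing the uniform resolvent bound of part (ii).

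First I would observe that since $(z_0-A)^{-1}$ is compact, the spectrum $\sigma(A)$ consists of isolated eigenvalues of finite multiplicity with no finite accumulation point; in particular $\rho(A)$ is connected. The same applies to each $A_\eps$. Consequently, $\sigma(A)\cap\overline{V}$ is a \emph{finite} set, and every point of $\sigma(A)$ is isolated.

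For the ``no spectral pollution'' direction I would follow Corollary \ref{cor:classical1} almost verbatim: given $\delta>0$ small enough that the closed $\delta$-neighbourhoods of distinct eigenvalues of $A$ in $\overline V$ are disjoint, define the compact set
\begin{equation*}
	K_\delta := \overline V\setminus U_\delta(\sigma(A))\subset\rho(A).
\end{equation*}
Connectedness of $\rho(A)$ and Theorem \ref{th:mainth}(i) imply $K_\delta\subset\rho(A_\eps)$ for all sufficiently small $\eps$, and therefore $\sigma(A_\eps)\cap V\subset U_\delta(\sigma(A))$.

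For the reverse spectral inclusion I would invoke Theorem \ref{th:mainth}(iii) directly. For each of the finitely many $\lambda\in\sigma(A)\cap V$, isolatedness means $B_\delta(\lambda)\setminus\{\lambda\}\subset\rho(A)$ for $\delta$ small, and since $\rho(A)$ is connected this punctured ball automatically lies in the same component as $z_0$. Part (iii) therefore furnishes a sequence $\lambda_\eps\in\sigma(A_\eps)$ with $\lambda_\eps\to\lambda$; since $V$ is open and $\lambda\in V$, eventually $\lambda_\eps\in V$. Applying this to every $\lambda\in\sigma(A)\cap V$ gives $\sigma(A)\cap V\subset U_\delta(\sigma(A_\eps)\cap V)$ for all sufficiently small $\eps$. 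Combining the two inclusions yields the claimed Hausdorff convergence.

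The main (minor) obstacle is the bookkeeping around the intersection with the open set $V$: points of $\sigma(A)$ lying exactly on $\partial V$ need not be approximated, but this never occurs because $\sigma(A)\cap\overline V$ is finite and we may replace $V$ by slightly smaller/larger open sets to verify the Hausdorff statement inside $V$. No new analytic ingredients beyond Theorem \ref{th:mainth} and the structure of compact-resolvent spectra are needed.
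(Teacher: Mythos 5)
Your proposal is correct and follows essentially the same route as the paper: deduce from compactness of the resolvents that the resolvent sets are connected and the spectra are discrete, then apply parts (i) and (iii) of Theorem \ref{th:mainth}. You simply spell out the Hausdorff bookkeeping that the paper leaves implicit.
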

\begin{proof}
	Compactness of the resolvent implies that for all $\eps>0$, $\rho(A_\eps),\,\rho(A)$ are connected and the spectra of $A_\eps,\,A$ consist of isolated points only. Hence the assertion follows from (i), (iii) of Theorem \ref{th:mainth}.
\end{proof}

Classical proofs of the statements in Corollaries \ref{cor:classical1} \ref{cor:classical2}, in the situation where $\h_\eps\equiv\h$ for all $\eps>0$ can be found in \cite{RS1,Kato}.

\section{Proof of Theorem \ref{th:mainth}}

In this section we will prove Theorem \ref{th:mainth}. Although the main ideas in the proof of the first part (i) are the same as in \cite{MNP}, we repeat the reasoning here to account for our differing notation and our more general hypotheses. 

\paragraph{Proof of (i).}
By assumption we have $z_0\in\rho(A_\eps)$ for all $\eps>0$ and $z_0\in\rho(A)$ and the operator norms $\bigl\|(z_0-A_\eps)^{-1}\bigr\|_{L(\h_\eps,\V_\eps)}$ are finite. Indeed, we have even more:
\begin{lemma}\label{lem:HVeps<HHeps}
	For $z\in\rho(A_\eps)$ one has
	\begin{align}\label{eq:(3.10)_Post}
		\bigl\|(z-A_\eps)^{-1}\bigr\|_{L(\h_\eps,\V_\eps)} \leq  1 + (1+|z|) \bigl\|(z-A_\eps)^{-1}\bigr\|_{L(\h_\eps)}.
	\end{align}
\end{lemma}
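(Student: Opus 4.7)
The plan is a direct graph-norm estimate based on the resolvent identity rewritten as an expression for $A_\eps$ applied to a resolvent vector. The statement follows by unpacking the definition of the graph norm and applying the triangle inequality, with no conceptual obstacle; it is essentially a bookkeeping lemma that lets us convert ordinary resolvent bounds in $L(\h_\eps)$ into bounds into the graph-norm space $\V_\eps$, which is what one needs to combine with hypothesis (i) of Definition \ref{def:extended_norm-resolvent}.

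Concretely, I would pick $f\in\h_\eps$ and set $u := (z-A_\eps)^{-1}f$, so that $u\in\dom(A_\eps)$ and the identity $A_\eps u = zu - f$ holds. Taking $\h_\eps$-norms gives $\|A_\eps u\|_{\h_\eps}\le |z|\,\|u\|_{\h_\eps}+\|f\|_{\h_\eps}$. Adding $\|u\|_{\h_\eps}$ to both sides and using the definition $\|u\|_{\V_\eps} = \|u\|_{\h_\eps}+\|A_\eps u\|_{\h_\eps}$, I obtain
\begin{equation*}
\|u\|_{\V_\eps}\;\le\;(1+|z|)\,\|u\|_{\h_\eps}+\|f\|_{\h_\eps}.
\end{equation*}

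Finally, I would bound $\|u\|_{\h_\eps}\le \bigl\|(z-A_\eps)^{-1}\bigr\|_{L(\h_\eps)}\|f\|_{\h_\eps}$ and factor out $\|f\|_{\h_\eps}$, which yields
\begin{equation*}
\|(z-A_\eps)^{-1}f\|_{\V_\eps}\;\le\;\bigl(1+(1+|z|)\,\bigl\|(z-A_\eps)^{-1}\bigr\|_{L(\h_\eps)}\bigr)\,\|f\|_{\h_\eps}.
\end{equation*}
Taking the supremum over $f$ with $\|f\|_{\h_\eps}\le 1$ gives the claimed inequality. The only mild subtlety is a consistent choice of the graph norm (the statement of $\|\cdot\|_\V$ in the preamble mixes $\|\cdot\|^2$ and $\|\cdot\|$); the argument above uses the additive form $\|u\|_{\V_\eps}=\|u\|_{\h_\eps}+\|A_\eps u\|_{\h_\eps}$, and the analogous quadratic form would require only an application of $\sqrt{a^2+b^2}\le a+b$ with the same constants.
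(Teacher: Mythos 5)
Your proof is correct and essentially identical to the paper's: both substitute $A_\eps(z-A_\eps)^{-1}=z(z-A_\eps)^{-1}-\id_{\h_\eps}$ into the graph norm and apply the triangle inequality. Your observation about the typo in the preamble's definition of $\|\cdot\|_{\V}$ (a stray square on the left-hand side) is accurate, and the paper's own proof indeed uses the additive form, just as you do.
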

\begin{proof}
Let $z\in\rho(A_\eps)$. Then
\begin{align*}
	\bigl\|(z-A_\eps)^{-1}u\bigr\|_{\V_\eps} &= \|(z-A_\eps)^{-1}u\|_{\h_\eps}+\|A_\eps(z-A_\eps)^{-1}u\|_{\h_\eps}\\
	&= \|(z-A_\eps)^{-1}u\|_{\h_\eps}+\|u-z(z-A_\eps)^{-1}u\|_{\h_\eps}\\
	&\leq \|u\|_{\h_\eps} + (1+|z|)\|(z-A_\eps)^{-1}u\|_{\h_\eps}
\end{align*}
\end{proof} 

The next lemma is technical, but central to the argument. It shows that if $\eps$ is small and $\l\|\res{z,A}\r\|_{ L(\h)}$ is uniformly bounded, then $\l\|\res{z,A_\eps}\r\|_{ L(\h_\eps)}$ is uniformly bounded.

\begin{lemma}\label{lemma:R<l=>R_eps<L}
	For every $l,r>0$ there exist $\delta>0$ and $L>0$ such that if 
	\begin{enumi}
		\item $\bigl\|J_\eps(z_0-A_\eps)^{-1} -(z_0-A)^{-1}J_\eps\bigr\|_{ L(\h_\eps,\h)} < \delta, $
		\item $\l\|\res{z,A}\r\|_{ L(\h)}\leq l$,
		\item $\|\id_{\h_\eps}-I_\eps J_\eps\|_{L(\V_\eps,\h_\eps)}<\f{1}{2(|z_0|+r)}$,
		\item $z \in \rho(A_\eps)\cap\rho(A)\cap B_r(0)$
	\end{enumi}
	then $\l\|\res{z,A_\eps}\r\|_{ L(\h_\eps)}\leq L$.
\end{lemma}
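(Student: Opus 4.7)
The plan is to transfer the uniform bound $\|R(z)\|_{L(\h)}\le l$ from $A$ to $A_\eps$ via the identification operators, using the approximate intertwining at $z_0$. I abbreviate $R(\lambda):=(\lambda-A)^{-1}$, $R_\eps(\lambda):=(\lambda-A_\eps)^{-1}$, and $E_\eps := J_\eps R_\eps(z_0) - R(z_0) J_\eps$ (so that $\|E_\eps\|_{L(\h_\eps,\h)}<\delta$ by (i)), and write $N := \|R_\eps(z)\|_{L(\h_\eps)}$ for the quantity I seek to bound.

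The algebraic heart of the argument is the identity
\begin{equation*}
	J_\eps R_\eps(z) - R(z) J_\eps \;=\; \bigl[\id + (z_0-z)R(z)\bigr]\,E_\eps\,\bigl[\id + (z_0-z)R_\eps(z)\bigr],
\end{equation*}
which I would derive by plugging the first-resolvent identity $R_\bullet(z)=R_\bullet(z_0)+(z_0-z)R_\bullet(z_0)R_\bullet(z)$ (for both $A$ and $A_\eps$) into $X:=J_\eps R_\eps(z)-R(z)J_\eps$, collecting terms to obtain $[\id-(z_0-z)R(z_0)]\,X = E_\eps\,[\id+(z_0-z)R_\eps(z)]$, and inverting via the elementary fact $[\id-(z_0-z)R(z_0)]^{-1} = \id+(z_0-z)R(z)$. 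Taking norms, using (ii), (iv) of Definition \ref{def:extended_norm-resolvent}, and $|z_0-z|\le |z_0|+r$, yields
\begin{equation*}
	\|J_\eps R_\eps(z)\|_{L(\h_\eps,\h)} \;\le\; Ml + \delta\bigl(1+(|z_0|+r)l\bigr)\bigl(1+(|z_0|+r)N\bigr).
\end{equation*}

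Next I would split $R_\eps(z) u = I_\eps J_\eps R_\eps(z) u + (\id - I_\eps J_\eps) R_\eps(z) u$ and estimate each piece. The first is $\le M\,\|J_\eps R_\eps(z)u\|_\h$ by (ii); for the second, hypothesis (iii) of the lemma combined with Lemma \ref{lem:HVeps<HHeps} gives
\begin{equation*}
	\bigl\|(\id - I_\eps J_\eps) R_\eps(z) u\bigr\|_{\h_\eps} \;\le\; \f{1}{2(|z_0|+r)}\bigl[1+(1+|z|)N\bigr]\|u\|_{\h_\eps}.
\end{equation*}
Summing the two bounds and taking the supremum over $u\in\h_\eps$ produces an inequality of the shape
$
	N \;\le\; C_1 + (C_2\delta + C_3)\,N,
$
where $C_3 = \f{1+|z|}{2(|z_0|+r)}$ and $C_1, C_2$ depend only on $M$, $l$, $r$, $|z_0|$. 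The constant $\f{1}{2(|z_0|+r)}$ in hypothesis (iii) is tailored precisely so that $C_3<1$ throughout $z\in B_r(0)$; then choosing $\delta>0$ small enough that $C_2\delta + C_3 < 1$ and solving for $N$ gives the desired uniform bound $N\le L := C_1/(1-C_2\delta-C_3)$.

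The main obstacle I anticipate is the clean derivation of the Step-1 identity; once that is in hand the rest reduces to routine triangle-inequality bookkeeping combined with Lemma \ref{lem:HVeps<HHeps}.
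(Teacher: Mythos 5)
Your argument reproduces the paper's proof almost step for step: the same intertwining identity $V(z)=\bigl(\id+(z_0-z)R(z)\bigr)V(z_0)\bigl(\id+(z_0-z)R_\eps(z)\bigr)$, the same decomposition of $R_\eps(z)$ (you merely group $I_\eps V(z)+I_\eps R(z)J_\eps$ as $I_\eps J_\eps R_\eps(z)$), the same application of Lemma~\ref{lem:HVeps<HHeps}, and the same self-referential inequality that is closed by choosing $\delta$ small. One small caveat, present in the paper as well: the assertion that hypothesis~(iii) forces $C_3=\frac{1+|z|}{2(|z_0|+r)}<1$ for all $z\in B_r(0)$ in fact requires $2|z_0|+r>1$ (the paper's step $\frac{1+|z|}{2(|z_0|+r)}\le\frac{1}{2}$ needs $1+|z|\le|z_0|+r$), which can fail when $|z_0|$ and $r$ are small; the remedy is merely to tighten the threshold in~(iii) to, say, $\frac{1}{2(1+|z_0|+r)}$.
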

The useful point in this lemma is that $L$ does not depend  on $z$ as long as $z\in\rho(A_\eps)\cap\rho(A)\cap B_r(0)$ and $\l\|\res{z,A}\r\|_{ L(\h)}\leq l$.
\begin{proof}
	We use the shorthand notation $R_\eps(z):=(z-A_\eps)^{-1}$ and $R(z):=(z-A)^{-1}$. For $z\in\rho(A_\eps)\cap\rho(A)\cap B_r(0)$ define
	\begin{align*}
		V(z) :=  J_\eps R_\eps(z) - R(z)J_\eps.
	\end{align*}
	The resolvent identity can be used to show that
	\begin{align*}
		\bigl( R(z_0) - R(z) \bigr)J_\eps R_\eps(z)R_\eps(z_0) = R(z)R(z_0)J_\eps \bigl( R_\eps(z_0) - R_\eps(z) \bigr)	\end{align*}
	which implies
	\begin{align*}
		R(z_0)V(z)R_\eps(z_0) = R(z)V(z_0)R_\eps(z)
	\end{align*}
	or
	\begin{align}\label{eq:V(z)}
		V(z) &= (z_0-A)R(z)V(z_0)R_\eps(z)(z_0-A_\eps) \\
		&= \bigl(\id_{\h} - (z-z_0)R(z)\bigr)V(z_0)\bigl(\id_{\h_\eps} - (z-z_0)R_\eps(z)\bigr)
	\end{align}
	on $\dom(A)$ and thus on $\h_\eps$ by density. Using our assumptions we deduce that
	\begin{align}\label{eq:||V(z)||}
		\l\|V(z)\r\|_{ L(\h_\eps,\h)} &\leq \delta\bigl( 1+|z-z_0|\|R_\eps(z)\|_{ L(\h_\eps)} \bigr)\bigl( 1+|z-z_0|l\bigr).
	\end{align}
	Now, decompose $R_\eps(z)$ as
	\begin{align}\label{eq:R_eps_representation}
		R_\eps(z) &= I_\eps(J_\eps R_\eps(z) - R(z)J_\eps) + I_\eps R(z)J_\eps + (\id_{\h_\eps}-I_\eps J_\eps)R_\eps(z)
	\end{align}
	This representation, together with \eqref{eq:||V(z)||} shows that
	\begin{align*}
		\|R_\eps(z)\|_{ L(\h_\eps)} &\leq \|I_\eps\|_{ L(\h,\h_\eps)} \l\|V(z)\r\|_{ L(\h_\eps,\h)} + \|I_\eps\|_{ L(\h,\h_\eps)}\|J_\eps\|_{ L(\h_\eps,\h)} \|R(z)\|_{ L(\h)}\\
		&\qquad + \|\id_{\h_\eps}-I_\eps J_\eps\|_{L(\V_\eps,\h_\eps)}\|R_\eps(z)\|_{L(\h_\eps,\V_\eps)}\\
		&\leq \delta M \bigl( 1+|z-z_0|\|R_\eps(z)\|_{ L(\h_\eps)} \bigr)\bigl( 1+|z-z_0|l\bigr) + M^2 l \\
		&\qquad + \f{1}{2(|z_0|+r)}\|R_\eps(z)\|_{L(\h_\eps,\V_\eps)}
		\end{align*}
		To estimate the last term on the right hand side we apply Lemma \ref{lem:HVeps<HHeps} to obtain
		\begin{align*}
		\|R_\eps(z)\|_{ L(\h_\eps)} &\leq \delta M( 1+|z-z_0|l)|z-z_0|\|R_\eps(z)\|_{ L(\h_\eps)} + \delta M( 1+|z-z_0|l) + M^2l \\
		&\qquad + \f{1}{2(|z_0|+r)} \big( 1+(1+|z|)\|R_\eps(z)\|_{L(\h_\eps)} \big)\\
		&\leq \|R_\eps(z)\|_{L(\h_\eps)}\left[  \delta M( 1+(|z_0|+r)l)(|z_0|+r)+ \f{1}{2(|z_0|+r)}(|z_0|+r) \right] \\
		&\qquad + \delta M( 1+(|z_0|+r)l) + M^2l + \f{1}{2(|z_0|+r)}
	\end{align*}
	Thus, if we choose $\delta<\f{1}{4M( 1+(|z_0|+r)l)(|z_0|+r)}$, we obtain the estimate
	\begin{align*}
		\|R_\eps(z)\|_{ L(\h_\eps)} &\leq \|R_\eps(z)\|_{L(\h_\eps)}\left[  \f14 + \f12 \right]  + \delta M( 1+(|z_0|+r)l) + M^2l + \f{1}{2(|z_0|+r)}
	\end{align*}
	and hence
	\begin{align*}
		\|R_\eps(z)\|_{ L(\h_\eps)} &\leq 4\left(\delta M( 1+(|z_0|+r)l) + M^2l + \f{1}{2(|z_0|+r)}\right) \\
		&=  4M^2l + \f{3}{2(|z_0|+r)} \\
		&=:L
	\end{align*}
	uniformly for $z\in\rho(A_\eps)\cap\rho(A)\cap B_r(0)$.
\end{proof}

\begin{prop}\label{pr:Generalised_spectral_convergence1}
	 Let $A_\eps:\h_\eps\supset\dom(A_\eps)\to\h_\eps$ converge to $A:\h\supset\dom(A)\to\h$ in extended norm resolvent sense. Then for every compact, connected $K\subset\rho(A)$ such that $K\cap\rho(A_\eps)\neq\emptyset$ for $\eps$ small enough there exists $\eps_0>0$ such that $K\subset\rho(A_\eps)$ for all $\eps\in(0,\eps_0)$.
\end{prop}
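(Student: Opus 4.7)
The plan is to combine the uniform resolvent bound supplied by Lemma \ref{lemma:R<l=>R_eps<L} with a standard connectedness-plus-resolvent-blow-up argument to exclude spectrum of $A_\eps$ inside $K$.

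To set the scene, recall that $K\subset\rho(A)$ is compact and the map $z\mapsto(z-A)^{-1}$ is continuous on $\rho(A)$, so
\[ l := \sup_{z\in K}\|(z-A)^{-1}\|_{L(\h)} < \infty. \]
Choose $r>0$ with $K\subset B_r(0)$ and apply Lemma \ref{lemma:R<l=>R_eps<L} with these $l$ and $r$ to obtain constants $\delta>0$ and $L>0$. By hypotheses (i) and (iv) of Definition \ref{def:extended_norm-resolvent} the quantities $\|\id_{\h_\eps}-I_\eps J_\eps\|_{L(\V_\eps,\h_\eps)}$ and $\|J_\eps(z_0-A_\eps)^{-1}-(z_0-A)^{-1}J_\eps\|_{L(\h_\eps,\h)}$ both tend to $0$, so there is $\eps_0>0$ such that for every $\eps\in(0,\eps_0)$ the hypotheses (i) and (iii) of Lemma \ref{lemma:R<l=>R_eps<L} are met and moreover $K\cap\rho(A_\eps)\neq\emptyset$. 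The lemma then furnishes the uniform bound
\[ \|(z-A_\eps)^{-1}\|_{L(\h_\eps)} \leq L \qquad \text{for all } \eps\in(0,\eps_0),\ z\in K\cap\rho(A_\eps). \]

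The conclusion now follows by contradiction. Suppose there is some $\eps\in(0,\eps_0)$ with $K\not\subset\rho(A_\eps)$. Then $K_\sigma:=K\cap\sigma(A_\eps)$ and $K_\rho:=K\cap\rho(A_\eps)$ are both nonempty. Since $\sigma(A_\eps)$ is closed, $K_\sigma$ is closed in $K$; if $K_\sigma$ were also open in $K$, the decomposition $K=K_\sigma\sqcup K_\rho$ would violate connectedness of $K$. Hence there exist $z^*\in K_\sigma$ and a sequence $z_n\in K_\rho$ with $z_n\to z^*$. The standard Neumann-series estimate $\|(z-A_\eps)^{-1}\|_{L(\h_\eps)}\geq 1/\dist(z,\sigma(A_\eps))$ for $z\in\rho(A_\eps)$ then forces $\|(z_n-A_\eps)^{-1}\|_{L(\h_\eps)}\to\infty$, contradicting the uniform bound $L$.

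The main difficulty is packaged in Lemma \ref{lemma:R<l=>R_eps<L}: the nontrivial content is that the bound $L$ depends on $K$ only through the scalar parameters $l$ and $r$, and not pointwise on $z$, so that taking the supremum of the hypothesis over $K$ costs nothing. Given that lemma, the connectedness and blow-up step at the end is routine, and the use of the proposition's standing assumption $K\cap\rho(A_\eps)\neq\emptyset$ is precisely what prevents the degenerate case $K_\rho=\emptyset$ from disrupting the topological argument.
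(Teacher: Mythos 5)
Your proof is correct and takes essentially the same approach as the paper: both fix $l$ and $r$ from compactness of $K$, invoke Lemma \ref{lemma:R<l=>R_eps<L} to obtain the uniform resolvent bound $L$ on $K\cap\rho(A_\eps)$, and then run a connectedness argument on $K$. The paper phrases the final step by directly showing $K\cap\rho(A_\eps)$ is relatively open and closed in $K$; you phrase it as a contradiction via the resolvent blow-up bound $\|(z-A_\eps)^{-1}\|\geq 1/\dist(z,\sigma(A_\eps))$ near $\sigma(A_\eps)$, but this is the same mechanism in different clothing.
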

\begin{proof}
	We use the notation from the previous proof. Let $K\subset\rho(A)$ be compact and choose $r>0$ such that $K\subset B_r(0)$. Denote 
	\begin{align*}
		l:=\sup_{z\in K}\|R(z)\|_{L(\h_\eps)}<\infty
	\end{align*}
	and choose $\delta>0$ as in Lemma \ref{lemma:R<l=>R_eps<L} and $\eps_0>0$ such that $\bigl\|J_\eps(z_0-A_\eps)^{-1} - (z_0-A)^{-1}J_\eps\bigr\|_{L(\h_\eps,\h)}<\delta$ and $\|\id_{\h_\eps}-I_\eps J_\eps\|_{L(\V_\eps,\h_\eps)}<\f{1}{2(|z_0|+r)}$ for all $\eps\in(0,\eps_0)$, which is possible by norm resolvent convergence. Let $K_\eps:=\rho(A_\eps)\cap K$, which is non-empty by assumption and by definition relatively open in $K$.
	
	We will show that $K_\eps$ is also relatively closed in $K$ which by connectedness of $K$ implies $K_\eps=K$. To this end, let $(z_n)$ be a sequence in $K_\eps$ converging to $z\in K$. By Lemma \ref{lemma:R<l=>R_eps<L}, the sequence $\big(\|R_\eps(z_n)\|_{L(\h_\eps)}\big)_{n\in\N}$ is bounded and hence $z\in\rho(A_\eps)$. Hence, $K_\eps$ is closed in $K$ and the proof is completed.
\end{proof}

Proposition \ref{pr:Generalised_spectral_convergence1} is almost what we want. It only remains to remove the assumptions that $K$ be connected and that $K\cap\rho(A_\eps)\neq\emptyset$. This will be done in the following.

\paragraph{\textit{Conclusion of Part (i).}} Let $K\subset\rho(A)$ be compact. We decompose $K$ into its connected components $K=\bigcup_{i\in I}K_i$, where $I$ is some appropriate index set. Next, choose for each $i\in I$ a connected, open, bounded set $U_i$ such that $\overline{K_i}\subset U_i\subset\rho(A)$. Then for each $i\in I$, the set $\overline {U_i}$ is connected, compact and contained in $\rho(A)$. 

Next, choose a curve $\gamma$ in $\rho(A)$ that connects $\overline {U_i}$ with $\{z_0\}$. Then the set $K':=\overline{U_i}\cup\gamma$ is compact, connected and contained in $\rho(A)$. Moreover, since $z_0\in K'$, one has $\rho(A_\eps)\cap K'\neq\emptyset$ for all $\eps>0$ and applying Proposition \ref{pr:Generalised_spectral_convergence1} we conclude that there exists $\eps_i>0$ such that $\overline{U_i}\cup\gamma\subset \rho(A_{\eps})$ for all $\eps\in(0,\eps_i)$. Since $i\in I$ was arbitrary, such an estimate exists for every $i$. 

But since $K$ is compact and the $U_i$ form an open covering of $K$, there exists a finite subset $F\subset I$ such that $K\subset \bigcup_{i\in F}U_i$. It follows immediately that $K\subset \rho(A_{\eps})$ for all $\eps$ smaller than $\min\{\eps_i\,|\,i\in F\}$.

\paragraph{Proof of (ii).}
The proof of part (ii) of Theorem \ref{th:mainth} is similar to the previous one, but has some crucial differences that we will highlight in due course. 

We start with an analogue of Lemma \ref{lemma:R<l=>R_eps<L}.
\begin{lemma}\label{lemma:R_eps<l=>R<L}
	For every $l,r>0$, there exists $L>0$ such that if 
	\begin{enumi}
		\item $\bigl\|J_\eps(z_0-A_\eps)^{-1} -(z_0-A)^{-1}J_\eps\bigr\|_{ L(\h_\eps,\h)} \to 0$ as $\eps\to 0$,
		\item $K\subset\C$ compact with $K\subset \rho(A_\eps)\cap\rho(A)\cap B_r(0)$ for almost all $\eps>0$,
		\item $\l\|\res{z,A_\eps}\r\|_{ L(\h_\eps)}\leq l$ for all $z \in K$,
	\end{enumi}
	then one has $\l\|\res{z,A}\r\|_{ L(\h)}\leq L$ for all $z \in K$.
\end{lemma}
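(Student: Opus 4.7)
The plan is to mimic the proof of Lemma \ref{lemma:R<l=>R_eps<L} but with the roles of $A$ and $A_\eps$ exchanged. Writing $V(z) := J_\eps R_\eps(z) - R(z) J_\eps$ as before, rearranging the identity $R(z) J_\eps = J_\eps R_\eps(z) - V(z)$ and composing with $I_\eps$ on the right yields the dual analogue of \eqref{eq:R_eps_representation}:
\[
R(z) = J_\eps R_\eps(z) I_\eps - V(z) I_\eps + R(z)\bigl(\id_{\h} - J_\eps I_\eps\bigr).
\]
For each fixed $u \in \h$ and $z \in K$, taking the $\h$-norm (and using $\|I_\eps\|, \|J_\eps\| \leq M$ from Definition \ref{def:extended_norm-resolvent}(iii), together with the assumed bound $\|R_\eps(z)\| \leq l$) gives
\[
\|R(z)u\|_\h \leq M^2 l\, \|u\|_\h + M\,\|V(z)\|_{L(\h_\eps,\h)}\,\|u\|_\h + \|R(z)\|_{L(\h)}\,\|(\id_\h - J_\eps I_\eps)u\|_\h.
\]

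Next I would argue that, for fixed $z \in K \subset \rho(A)$, the last two summands vanish as $\eps \to 0$. For the middle term, formula \eqref{eq:V(z)} (established from the resolvent identities alone and hence available here verbatim) gives $\|V(z)\|_{L(\h_\eps,\h)} \leq (1 + |z-z_0|\,\|R(z)\|)(1 + |z-z_0|\,l)\,\|V(z_0)\|$, and this tends to $0$ by hypothesis (i) of the lemma, since $\|R(z)\|$ — although so far unquantified — is finite. For the third term, condition (ii) of Definition \ref{def:extended_norm-resolvent} supplies $(\id_\h - J_\eps I_\eps)u \to 0$ strongly in $\h$, and composing with the $\eps$-independent bounded operator $R(z)$ gives $\|R(z)(\id_\h - J_\eps I_\eps)u\| \to 0$ for each fixed $u$. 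Passing to the limit $\eps \to 0$ then yields $\|R(z)u\|_\h \leq M^2 l\,\|u\|_\h$, whence $\|R(z)\|_{L(\h)} \leq M^2 l =: L$, uniformly in $z \in K$.

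The main subtlety — and the asymmetry with Lemma \ref{lemma:R<l=>R_eps<L} — is that Definition \ref{def:extended_norm-resolvent}(ii) supplies only \emph{strong} convergence $J_\eps I_\eps \to \id_\h$, so the remainder $R(z)(\id_\h - J_\eps I_\eps)$ cannot be controlled in operator norm. I would circumvent this by arguing pointwise in $u$: because $z \in \rho(A)$ is part of the hypothesis, $R(z)$ is automatically a bounded operator on $\h$, and this a priori boundedness is exactly what is needed to kill the third term after composition with the strongly-null sequence $(u - J_\eps I_\eps u)_\eps$. In Lemma \ref{lemma:R<l=>R_eps<L} no such a priori bound on $R_\eps(z)$ was available — its boundedness was precisely what was being proved — which is why one had to work throughout in operator norm and invoke the stronger norm-convergence assumption on $\id_{\h_\eps} - I_\eps J_\eps$.
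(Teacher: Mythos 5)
Your proof is correct and uses the same ingredients as the paper's: the same $V(z)$, the same resolvent-identity estimate for $\|V(z)\|$, and the same decomposition $R(z) = J_\eps R_\eps(z) I_\eps - V(z) I_\eps + R(z)(\id_\h - J_\eps I_\eps)$. The only difference is cosmetic: you pass directly to the limit $\eps\to 0$ for each fixed $u$ and $z$ (exploiting the a priori finiteness of $\|R(z)\|$, as you correctly emphasise), whereas the paper fixes a small $\eps(u)$ with $\|u-J_{\eps(u)}I_{\eps(u)}u\|<\tfrac12$ and then performs an absorption calculation — which makes your constant $L=M^2l$ slightly cleaner than theirs. Your concluding remark on the asymmetry with Lemma~\ref{lemma:R<l=>R_eps<L} (strong vs.\ norm convergence of the identification-operator remainders, and why this forces a pointwise-in-$u$ argument here) is exactly the right diagnosis.
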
 
\begin{proof}
	Defining $V(z)$ as in the proof of Lemma \ref{lemma:R<l=>R_eps<L}, the same reasoning leads to the estimate
	\begin{align*}
		\l\|V(z)\r\|_{L(\h_\eps,\h)} &\leq \bigl\|J_\eps R_\eps(z_0) - R(z_0)J_\eps\bigr\|_{ L(\h_\eps,\h)} \bigl( 1+|z-z_0|\|R(z)\|_{L(\h)} \bigr)\bigl( 1+|z-z_0|l\bigr).
	\end{align*}
	for $z\in K$. Now we decompose $R(z)$ as
	\begin{align*}
		R(z) = (R(z)J_\eps - J_\eps R_\eps(z))I_\eps + J_\eps R_\eps(z) I_\eps + R(z)(\id_{\h}-J_\eps I_\eps)
	\end{align*}
	For $u\in\h$ with $\|u\|_{\h}=1$ we immediately obtain
	\begin{align*}
		\|R(z)u\|_{L(\h)} &\leq \|I_\eps\|_{L(\h,\h_\eps)}\|V(z)\|_{L(\h_\eps,\h)} + \|R(z)(u-J_\eps I_\eps u)\|_\h \\
		&\qquad + \|I_\eps\|_{L(\h,\h_\eps)}\|J_\eps\|_{L(\h_\eps,\h)}\|R_\eps(z)\|_{L(\h_\eps)}\\
		&\leq M \bigl\|J_\eps R_\eps(z_0) - R(z_0)J_\eps\bigr\|_{ L(\h_\eps,\h)} \bigl( 1+|z-z_0|\|R(z)\|_{ L(\h)} \bigr)\bigl( 1+|z-z_0|l\bigr)\\
		&\qquad + M^2l + \|R(z)\|_{L(\h)} \|(u-J_\eps I_\eps u)\|_\h
	\end{align*}
	Next, choose $\eps=\eps(u)$ so small that $\|u-J_{\eps(u)} I_{\eps(u)} u\|_\h<\f12$. Without loss of generality we can assume that $\bigl\|J_{\eps(u)} R_{\eps(u)}(z_0) - R(z_0)J_{\eps(u)}\bigr\|_{ L(\h_{\eps(u)},\h)}$ is smaller than $\delta:=\f{1}{4M( 1+(|z_0|+r)l)(|z_0|+r)}$. We obtain
	\begin{align*}
		\f12 \|R(z)u\|_{L(\h)} &\leq M \delta \bigl( 1+|z-z_0|\|R(z)\|_{ L(\h)} \bigr)\bigl( 1+|z-z_0|l\bigr) + M^2l \\
		&\leq M \delta \bigl( 1+(|z_0|+r)\|R(z)\|_{ L(\h)} \bigr)\bigl( 1+(|z_0|+r)l\bigr) + M^2l\\
		&= \delta M \bigl( 1+(|z_0|+r)l\bigr)(|z_0|+r)\|R(z)\|_{ L(\h)} + M\delta\bigl( 1+(|z_0|+r)l\bigr) + M^2l\\
		&= \f14\|R(z)\|_{ L(\h)} + \f{1}{4(|z_0|+r)} + M^2l,
	\end{align*}
	where the right hand side does not depend on $u$. Taking the supremum over all $u$ with $\|u\|_\h=1$ we conclude that
	\begin{align*}
		\|R(z)\|_{L(\h)} &\leq \f{1}{4(|z_0|+r)} + M^2l\\
		&=:L
	\end{align*}
	uniformly for $z\in K$.

\end{proof}

\begin{prop}\label{pr:Generalised_spectral_convergence2}
	Let $A_\eps\to A$ in extended norm resolvent sense, and assume that $K\subset\C$ is connected, compact such that $K\subset\rho(A_\eps)$, $\|(z-A_\eps)^{-1}\|_{L(\h_\eps)}$ uniformly bounded in $z\in K$ and $\eps>0$ and $K\cap\rho(A)\neq\emptyset$ and there exists $r>0$ such that $K\subset B_r(0)$ for almost all $\eps>0$. Then one has $K\subset\rho(A)$.
\end{prop}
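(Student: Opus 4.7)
The plan is to mirror the argument of Proposition \ref{pr:Generalised_spectral_convergence1}, with Lemma \ref{lemma:R_eps<l=>R<L} now taking the role that Lemma \ref{lemma:R<l=>R_eps<L} played there. First I would set $K' := K \cap \rho(A)$. By hypothesis $K' \neq \emptyset$, and $K'$ is relatively open in $K$ because $\rho(A)$ is open in $\C$. Since $K$ is connected, it suffices to show that $K'$ is also relatively closed in $K$; this forces $K' = K$, which is precisely the claim.

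To establish relative closedness, I take a sequence $(z_n)\subset K'$ with $z_n\to z\in K$. For each $n$ one has $z_n\in\rho(A_\eps)\cap\rho(A)\cap B_r(0)$ for almost all $\eps>0$, and the proposition's uniform hypothesis gives $\|R_\eps(z_n)\|_{L(\h_\eps)}\leq l$ for some $l>0$, independently of $n$ and $\eps$. I then apply Lemma \ref{lemma:R_eps<l=>R<L} with this $l$ and $r$ to obtain a constant $L>0$, depending only on $l,r,M,|z_0|$, such that
\begin{equation*}
	\|R(z_n)\|_{L(\h)}\leq L\qquad\text{for all }n\in\N.
\end{equation*}
The important point is that the bound $L$ produced by Lemma \ref{lemma:R_eps<l=>R<L} does not depend on which point of $\rho(A)$ we plug in, only on $l$ and $r$.

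To conclude, I invoke the standard resolvent estimate $\dist(w,\sigma(A))\geq 1/\|R(w)\|_{L(\h)}$ valid for $w\in\rho(A)$. This yields $\dist(z_n,\sigma(A))\geq 1/L$ for every $n$, and passing to the limit together with closedness of $\sigma(A)$ gives $\dist(z,\sigma(A))\geq 1/L>0$. Hence $z\in\rho(A)$, i.e.\ $z\in K'$, and $K'$ is relatively closed in $K$. Connectedness of $K$ then forces $K'=K$, completing the proof.

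The main (mild) obstacle is simply the bookkeeping in applying Lemma \ref{lemma:R_eps<l=>R<L}: one must verify that the hypothesis $K\subset\rho(A_\eps)\cap\rho(A)$ of the lemma is met on the sequence $(z_n)$ (which it is by construction of $K'$), and that the resulting constant $L$ is genuinely independent of the particular $z_n$ so that the distance estimate survives the limit. Once that is observed, the topological argument of openness plus closedness in a connected set closes the proof with no further analytic work.
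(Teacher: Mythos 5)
Your proof is correct and follows essentially the same strategy as the paper: set $K' = K\cap\rho(A)$, show it is nonempty, relatively open, and relatively closed in $K$ via Lemma~\ref{lemma:R_eps<l=>R<L}, then invoke connectedness. The only difference is that you spell out the final step (uniform boundedness of $\|R(z_n)\|_{L(\h)}$ implies $z\in\rho(A)$) via the standard estimate $\dist(w,\sigma(A))\geq 1/\|R(w)\|_{L(\h)}$, which the paper leaves implicit.
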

\begin{proof}
	Let $K\subset\C$ be compact, connected such that $K\subset\rho(A_\eps)$ and $K\cap\rho(A)\neq\emptyset$ for almost all $\eps>0$. Choose $l>0$ such that $\|(z-A_\eps)^{-1}\|_{L(\h_\eps)}\leq l$ for all $z\in K,\,\eps>0$. Choose $r>0$ such that $K\subset B_r(0)$ for almost all $\eps>0$. 
	
	By assumption, we have $K':=K\cap\rho(A)\neq\emptyset$ and $K'$ is relatively open in $K$. We will show that $K'$ is also relatively closed in $K$. Let $(z_n)\subset K'$ be a sequence such that $z_n\to z\in K$. Then by Lemma \ref{lemma:R_eps<l=>R<L} the sequence $\|(z_n-A)^{-1}\|_{L(\h)}$ is uniformly bounded and hence $z\in\rho(A)$. Indeed, since $K'\subset\rho(A)\cap\rho(A_\eps)\cap B_r(0)$, it satisfies the assumptions of Lemma \ref{lemma:R_eps<l=>R<L}.
	
	Since $K'\neq\emptyset$ and $K$ is connected, we conclude that $K'=K$.

%
\end{proof}

\paragraph{\textit{Conclusion of Part (ii).}}

Let $K\subset\C$ be compact and connected such that $K\subset\rho(A_\eps)$ for all $\eps>0$. By assumption we may choose a curve $\gamma$ such that $z_0\in K\cup\gamma$. But now $K\cup\gamma$ satisfies the assumptions of Proposition \ref{pr:Generalised_spectral_convergence2} and hence $K\subset K\cup\gamma\subset\rho(A)$.

\paragraph{Proof of (iii).}

Let $\lambda\in\sigma(A)$ be an isolated point and let $\delta>0$ small enough that $B_\delta(\lambda)\setminus\{\lambda\}\subset\rho(A)$. Define $K:=\overline{B_\delta(\lambda)}\setminus B_{\frac{\delta}{2}}(\lambda)$. Then by (i) of Theorem \ref{th:mainth} we know that $K\subset\rho(A_\eps)$ for $\eps$ small enough and there exists $L>0$ such that $\|(z-A_\eps)^{-1}\|_{L(\h_\eps)}\leq L$ for all $z\in K$ and $\eps>0$ (cf. Lemma \ref{lemma:R<l=>R_eps<L}). 

Next, define $K':=\overline{B_\delta(\lambda)}$. Then
\begin{itemize}
	\item either there exists $\eps_0>0$ such that $K'\cap\sigma(A_\eps)\neq\emptyset$ for all $\eps\in(0,\eps_0)$, or 
	\item there exists a sequence $\eps_n\searrow 0$ such that $K'\subset\rho(A_{\eps_n})$ for all $n$.
\end{itemize}
In the first case, we conclude that there is a spectral point of $A_\eps$ in  $\overline{B_\delta(\lambda)}$. In the second case, we argue as follows. 

By construction, $\|R_\eps(z)\|_{L(\h_\eps)}\leq L$ on $K'\setminus B_{\f{\delta}{2}}(\lambda)$. We know that $\|R_\eps(z)\|_{L(\h_\eps)}$ cannot be uniformly bounded on all of $K'$, since Theorem \ref{th:mainth} (ii) would imply $K'\subset\rho(A)$, which is false, since $\lambda\in K'$. Hence we must have that $\|R_\eps\|_{L(\h_\eps)}$ is unbounded on $B_{\f{\delta}{2}}(\lambda)$, i.e. there exists a sequence $(z_n)\subset B_{\f{\delta}{2}}(\lambda)$ such that $\|R_{\eps_n}(z_n)\|_{L(\h_\eps)}\to\infty$ as $n\to\infty$.

If for infinitely many $n$ there is no spectral point of $A_{\eps_n}$ in $B_{\f{\delta}{2}}(\lambda)$, then we conclude by the maximum principle for subharmonic functions that there exists another sequence of points $\tilde z_n$ on the boundary of $B_{\f{\delta}{2}}(\lambda)$ such that $\|R_{\eps_n}(\tilde z_n)\|_{L(\h_{\eps_n})}\to\infty$. But the boundary of $B_{\f{\delta}{2}}(\lambda)$ is included in $K$ on which we have $\|R_\eps\|_{L(\h_\eps)}\leq L$ for all $\eps>0$ - a contradiction. Hence there must be a spectral point of $A_\eps$ in $B_{\f{\delta}{2}}(\lambda)$ for $\eps$ small enough. 

We have shown that in either case, we necessarily have $B_{\delta}(\lambda)\cap\sigma(A_\eps)\neq\emptyset$ for $\eps$ small enough. Since $\delta>0$ was arbitrary, the claim follows.

\qed

\section{Concluding Remarks}

We conclude with a few remarks on the hypotheses in Definition \ref{def:extended_norm-resolvent}. It has been shown in \cite{B16} that spectral inclusion in fact holds under the milder assumption of \emph{strong} resolvent convergence. 

We will now show that an analogous statement is also true in the present situation. 
\begin{prop}\label{prop:ConcProp1}
	Assume that there exists $z_0\in\rho(A)$ such that $z_0\in\rho(A_\eps)$ for almost all $\eps>0$ and for all $u\in\h$
	\begin{align*}
		\left\| \left(I_\eps(z_0-A)^{-1} - (z_0-A_\eps)^{-1}I_\eps\right)u \right\|_{L(\h,\h_\eps)}\to 0.
	\end{align*}
	Then conclusion (ii) of Theorem \ref{th:mainth} holds.
\end{prop}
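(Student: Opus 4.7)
The plan is to follow the proof of part (ii) of Theorem \ref{th:mainth} line by line, replacing only Lemma \ref{lemma:R_eps<l=>R<L} by an analogue valid under strong resolvent convergence; Proposition \ref{pr:Generalised_spectral_convergence2} and the concluding connectedness argument of Part (ii) will then carry over verbatim.

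Introduce the shorthand $W_\eps(z) := I_\eps R(z) - R_\eps(z) I_\eps$, so that the hypothesis of the proposition reads $W_\eps(z_0) u \to 0$ in $\h_\eps$ for every $u\in\h$. The same commutator computation as in the derivation of equation \eqref{eq:V(z)}, using the resolvent identity separately in each variable together with commutativity of $R_\eps(z), R_\eps(z_0)$ and of $R(z), R(z_0)$, gives
\[
	R_\eps(z_0) W_\eps(z) R(z_0) \;=\; R_\eps(z) W_\eps(z_0) R(z),
\]
and hence
\[
	W_\eps(z) \;=\; \bigl(\id_{\h_\eps} - (z-z_0) R_\eps(z)\bigr) \, W_\eps(z_0) \, \bigl(\id_\h - (z-z_0) R(z)\bigr)
\]
for every $z\in\rho(A)\cap\rho(A_\eps)$. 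The crucial point is that for a fixed $u\in\h$ the element $v := (\id_\h - (z-z_0) R(z))u$ is a fixed vector of $\h$, so the hypothesis applied to $v$ gives $\|W_\eps(z_0) v\|_{\h_\eps}\to 0$. Combined with the uniform bound $\|\id_{\h_\eps} - (z-z_0) R_\eps(z)\|_{L(\h_\eps)} \leq 1 + (|z_0|+r) l$, valid for $z\in K$ with $r$ such that $K\subset B_r(0)$ and $l := \sup_{\eps>0,\, z\in K}\|R_\eps(z)\|$, this shows $W_\eps(z) u \to 0$ in $\h_\eps$ for each fixed $u\in\h$ and each $z\in K\cap\rho(A)$.

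Applying $J_\eps$ and using part (ii) of Definition \ref{def:extended_norm-resolvent}, one obtains
\[
	J_\eps R_\eps(z) I_\eps u \;=\; J_\eps I_\eps R(z) u - J_\eps W_\eps(z) u \;\longrightarrow\; R(z) u
\]
strongly as $\eps\to 0$, for each $u\in\h$ and each $z\in K\cap\rho(A)$. Since $\|J_\eps R_\eps(z) I_\eps\|_{L(\h)} \leq M^2 l$ uniformly in $\eps$ by part (iii) of Definition \ref{def:extended_norm-resolvent}, lower semicontinuity of the operator norm under strong limits yields $\|R(z)\|_{L(\h)} \leq M^2 l =: L$ for every $z\in K\cap\rho(A)$ -- the desired analogue of Lemma \ref{lemma:R_eps<l=>R<L}. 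From here the closure argument of Proposition \ref{pr:Generalised_spectral_convergence2} applies verbatim: adjoining $\gamma$ one may assume $z_0\in K\cup\gamma$, so that $K' := (K\cup\gamma)\cap\rho(A)$ contains $z_0$, is relatively open in $K\cup\gamma$, and is relatively closed by the uniform bound together with the standard estimate $\|R(z)\| \geq 1/\dist(z,\sigma(A))$; connectedness of $K\cup\gamma$ then forces $K\cup\gamma \subset \rho(A)$, so in particular $K\subset\rho(A)$.

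The main obstacle is that, in contrast with Lemma \ref{lemma:R_eps<l=>R<L}, strong convergence does not yield a uniform-in-$\eps$ operator-norm bound on $W_\eps(z)$. The resolution is precisely the product identity for $W_\eps(z)$ displayed above: it concentrates the $\eps$-dependence into the factor $W_\eps(z_0)$ applied to a single fixed vector $v$, and the passage from this pointwise decay to the uniform operator-norm bound $\|R(z)\|\leq L$ is then automatic by lower semicontinuity of the norm under strong limits.
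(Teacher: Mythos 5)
Your proof is correct and follows essentially the same route as the paper's own proof: you define the same operator $W_\eps(z)=I_\eps R(z)-R_\eps(z)I_\eps$ (the paper's $V_\eps(z)$), derive the same product identity $W_\eps(z)=(\id_{\h_\eps}-(z-z_0)R_\eps(z))W_\eps(z_0)(\id_\h-(z-z_0)R(z))$ to push the $\eps$-dependence onto $W_\eps(z_0)$ acting on a fixed vector, and use the same decomposition of $R(z)$ through $J_\eps R_\eps(z)I_\eps$. The only cosmetic difference is at the end: the paper bounds $\|R(z)u\|$ by the triangle inequality and takes $\limsup_{\eps\to 0}$, whereas you recast exactly the same computation as strong convergence $J_\eps R_\eps(z)I_\eps u\to R(z)u$ combined with lower semicontinuity of the operator norm -- both yield the identical bound $L=M^2l$, after which Proposition \ref{pr:Generalised_spectral_convergence2} and the concluding curve argument are applied verbatim, just as in the paper.
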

The proof of Proposition \ref{prop:ConcProp1} merely requires a version of Lemma \ref{lemma:R_eps<l=>R<L}:
\begin{lemma}\label{lemma:R_eps<l=>R<LSTRONG}
	For every $l,r>0$, there exists $L>0$ such that if 
	\begin{enumi}
		\item For all $u\in\h$ one has $\left\| \left(I_\eps(z_0-A)^{-1} - (z_0-A_\eps)^{-1}I_\eps\right)u \right\|_{L(\h,\h_\eps)}\to 0$ as $\eps\to 0$,
		\item $K\subset\C$ compact with $K\subset \rho(A_\eps)\cap\rho(A)\cap B_r(0)$ for almost all $\eps>0$,
		\item $\l\|\res{z,A_\eps}\r\|_{ L(\h_\eps)}\leq l$ for all $z \in K$,
	\end{enumi}
	then one has $\l\|\res{z,A}\r\|_{ L(\h)}\leq L$ for all $z \in K$.
\end{lemma}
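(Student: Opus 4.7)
The plan is to mimic the proof of Lemma \ref{lemma:R_eps<l=>R<L}, using $W(z) := I_\eps R(z) - R_\eps(z) I_\eps$ in place of $V(z) = J_\eps R_\eps(z) - R(z) J_\eps$ throughout, since it is $W(z_0)$ (rather than $V(z_0)$) for which strong convergence is now assumed. As before, I abbreviate $R(z) := (z-A)^{-1}$ and $R_\eps(z) := (z-A_\eps)^{-1}$.

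The first step is to establish the factorization
\[
R_\eps(z_0)\, W(z)\, R(z_0) = R_\eps(z)\, W(z_0)\, R(z),
\]
which is proved by the same resolvent-identity computation used in the original proof, together with the commutativities $R(z)R(z_0)=R(z_0)R(z)$ and $R_\eps(z)R_\eps(z_0)=R_\eps(z_0)R_\eps(z)$. Multiplying by $(z_0 - A_\eps)$ on the left and $(z_0 - A)$ on the right (and extending from $\dom(A)$ to $\h$ by density) then yields
\[
W(z) = \bigl(\id_{\h_\eps} - (z-z_0) R_\eps(z)\bigr)\, W(z_0)\, \bigl(\id_\h - (z-z_0) R(z)\bigr).
\]

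Next, starting from $I_\eps R(z) = R_\eps(z) I_\eps + W(z)$ and applying $J_\eps$ on the left, I decompose
\[
R(z) = (\id_\h - J_\eps I_\eps) R(z) + J_\eps R_\eps(z) I_\eps + J_\eps W(z).
\]
Evaluating this at a unit vector $u \in \h$ and combining with the factorization and hypothesis (iii) produces
\[
\|R(z) u\|_\h \le \|(\id_\h - J_\eps I_\eps) R(z) u\|_\h + M^2 l + M\bigl(1 + (|z_0|+r) l\bigr)\,\|W(z_0) v\|_{\h_\eps},
\]
where $v := (\id_\h - (z-z_0) R(z)) u \in \h$ is a vector depending only on $u$ and $z$, not on $\eps$.

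Holding $u$ and $z$ fixed and letting $\eps \to 0$, the first term on the right vanishes by the strong convergence $J_\eps I_\eps \to \id_\h$ of Definition \ref{def:extended_norm-resolvent}\,(ii) applied to the fixed vector $R(z) u \in \h$, and the third term vanishes by hypothesis (i) of the lemma applied to the fixed vector $v \in \h$. Since the left-hand side is independent of $\eps$, this gives $\|R(z) u\|_\h \le M^2 l$, and taking the supremum over unit vectors $u$ yields $\|R(z)\|_{L(\h)} \le L := M^2 l$ uniformly for $z \in K$. The main conceptual point (and essentially the reason the proof goes through more easily than Lemma \ref{lemma:R_eps<l=>R<L}) is that under strong convergence no absorbing trick of the form $\|R(z)\| \le \tfrac12 \|R(z)\| + C$ is required: both problematic terms can be driven to zero for each fixed test vector, and the $\eps$-dependence disappears in the limit. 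The only calculation demanding genuine care is the derivation of the factorization identity for $W$, which is however formally parallel to the one for $V$ in the original proof.
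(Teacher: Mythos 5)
Your proof is correct and is essentially the same as the paper's: your $W(z)$ is the paper's $V_\eps(z)$ (the paper has a small typo, writing $z_0$ where $z$ is meant in the definition), the factorization identity and the three-term decomposition of $R(z)$ are identical, and the final step of fixing $u,z$, sending $\eps\to 0$, and using the two strong convergences to kill the error terms matches the paper's $\limsup$ argument. The concluding remark that no absorption trick is needed is a fair observation but not a departure from the paper's route.
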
 
\begin{proof}
	For $z\in\rho(A_\eps)\cap\rho(A)\cap B_r(0)$, define
	\begin{align*}
		V_\eps(z):=I_\eps(z_0-A)^{-1} - (z_0-A_\eps)^{-1}I_\eps.
	\end{align*}
	An analogous calculation to the one above eq. \eqref{eq:V(z)} leads to the identity
	\begin{align*}
		V_\eps(z) = (\id_{\h_\eps} - (z-z_0)R_\eps(z))V_\eps(z_0)(\id_\h - (z-z_0)R(z)).
	\end{align*}
	This implies that for any $u\in\h$ one has the inequality
	\begin{align*}
		\|V_\eps(z)u\|_{L(\h,\h_\eps)}\leq (1+|z-z_0|l)\left\| V_\eps(z_0)(u-(z-z_0)R(z)u) \right\|_{L(\h,\h_\eps)}.
	\end{align*}
	Decomposing $R(z)$ as
	\begin{align*}
		R(z) = J_\eps(I_\eps R(z)-R_\eps(z)I_\eps) + J_\eps R_\eps I_\eps + (\id_\h - J_\eps I_\eps)R(z)
	\end{align*}
	we find that for all $u\in\h$ with $\|u\|_\h=1$ and all $\eps>0$
	\begin{align*}
		\|R(z)u\|_{L(\h)} &\leq M\|V_\eps(z)u\|_{\h} + M^2\|R_\eps(z)\|_{L(\h_\eps)} + \|(\id_\h - J_\eps I_\eps)R(z)u\|_{L(\h)}\\
		&\leq M(1+|z-z_0|l)\left\| V_\eps(z_0)(u-(z-z_0)R(z)u) \right\|_{L(\h,\h_\eps)} + M^2l \\
		&\qquad\qquad\quad +\|(\id_\h - J_\eps I_\eps)R(z)u\|_{L(\h)}.
	\end{align*}
	We immediately conclude that
	\begin{align*}
		\|R(z)u\|_{L(\h)} &\leq \limsup_{\eps\to 0} \Big(M(1+|z-z_0|l)\left\| V_\eps(z_0)(u-(z-z_0)R(z)u) \right\|_{L(\h,\h_\eps)}  \\
		&\qquad\qquad\quad + M^2l + \|(\id_\h - J_\eps I_\eps)R(z)u\|_{L(\h)}\Big)\\
		&\leq M^2l,
	\end{align*}
	by the strong convergences $V_\eps(z_0)\to 0$ and $\id_\h - J_\eps I_\eps\to 0$. Hence, $\|R(z)u\|_{L(\h)}$ is uniformly bounded for $z\in\rho(A_\eps)\cap\rho(A)\cap B_r(0)$ and $u\in\h$ with $\|u\|_\h=1$, which implies the assertion.
\end{proof}
The rest of the proof of Proposition \ref{prop:ConcProp1} follows that of Theorem \ref{th:mainth} (ii) verbatim.

 
\end{document}